\documentclass[a4paper,11pt]{article}

\usepackage{a4wide}
\usepackage[english]{babel}
\usepackage{amsfonts}
\usepackage{amsmath}
\usepackage{graphicx}
\usepackage[colorlinks,bookmarks=true]{hyperref}
\usepackage{amsthm}
\usepackage{enumitem}

\newtheorem{lemma}{Lemma}[section]

\newtheorem{theorem}{Theorem}[section]
\newtheorem{proposition}{Proposition}[section]

\newtheorem{corollary}{Corollary}[section]

\newcommand{\E}{\mathbb{E}}
\newcommand{\PP}{\mathbb{P}}

\def\be{\begin{eqnarray}}
\def\ee{\end{eqnarray}}
\def\b*{\begin{eqnarray*}}
\def\eq*{\end{eqnarray*}}
\def\beq{\begin{equation}}
\def\eeq{\end{equation}}

\title{On the law of a triplet associated with the\\
pseudo-Brownian bridge}

\author{Mathieu Rosenbaum and Marc Yor\\$~~$\\
LPMA, University Pierre et Marie Curie (Paris 6)}

\begin{document}

\maketitle

\begin{abstract}
\noindent We identify the distribution of a natural triplet associated with the pseudo-Brownian bridge. In particular, for $B$ a Brownian motion and $T_1$ its first hitting time of the level one, this remarkable law allows us to understand some properties of the process $(B_{uT_1}/\sqrt{T_1},~u\leq 1)$ under uniform random sampling, a study started in \cite{elie2013expectation}.
\end{abstract}

\noindent \textbf{Keywords:} Brownian motion, pseudo-Brownian bridge, Bessel process, local time, hitting times, scaling, uniform sampling, Mellin transform.

\section{Introduction and main results}\label{intro}

Let $(B_t,~t\geq 0)$ be a standard Brownian motion and $(T_a,~a>0)$ its first hitting times process:
$$T_a=\text{inf}\{t,~B_t>a\}.$$
Let $U$ denote a uniform random variable on $[0,1]$, independent of $B$. In \cite{elie2013expectation}, the very interesting distribution of the random variable
$$\frac{B_{UT_a}}{\sqrt{T_a}}$$
is described. In particular, it is shown that this law does not depend on $a$, admits moments of any order and is centered. Furthermore, its density is quite remarkable, see \cite{elie2013expectation} for details. In this work, our goal is to extend this study by giving some general properties of the rescaled Brownian motion up to its first hitting time of level $1$, that is the process $(\alpha_u,~u\leq 1)$ defined by $$\alpha_u=\frac{B_{uT_1}}{\sqrt{T_1}}.$$ Here also, we will focus on the case of a uniform random sampling. Indeed, it is a very natural sampling scheme, which is known to lead to deep properties, see the seminal paper \cite{pitman1999brownian}.\\

\noindent Let $(L_t,~t\geq 0)$ be the local time of the Brownian motion at point $0$ and $(\tau_l,~l>0)$ be the inverse local time process:
$$\tau_l=\text{inf}\{t,~L_t>l\}.$$ It turns out that our results on the process $(\alpha_u)$ will be deduced from properties obtained on the pseudo-Brownian bridge. This process was introduced in \cite{biane1987processus} and is defined by
$$(\frac{B_{u\tau_1}}{\sqrt{\tau_1}},~u\leq 1).$$
The pseudo-Brownian bridge is equal to $0$ at time $0$ and time $1$ and has the same quadratic variation as the Brownian motion. Thus, it shares some similarities with the Brownian bridge, which explains its name. We refer to \cite{biane1987processus} for more on this process. More precisely, we consider in this paper the triplet
$$(\frac{B_{U\tau_1}}{\sqrt{\tau_1}},\frac{1}{\sqrt{\tau_1}},L_{U\tau_1}),$$ 
where $U$ is a uniform random variable on $[0,1]$, independent of $B$. Our main theorem, whose proof is given in Section \ref{proof} and where $\underset{\mathcal{L}}{=}$ denotes equality in law, is the following.

\begin{theorem}\label{theo1}
The following identity in law holds:
$$(\frac{B_{U\tau_1}}{\sqrt{\tau_1}},\frac{1}{\sqrt{\tau_1}},L_{U\tau_1})\underset{\mathcal{L}}{=}(\frac{1}{2}B_1,L_1,\Lambda),$$
with $\Lambda$ a uniform random variable on $[0,1]$, independent of $(B_1,L_1)$.
\end{theorem}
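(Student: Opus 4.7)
The plan is to compute the joint law of the triplet directly via It\^o excursion theory, in two stages.

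\emph{Stage 1 (Reduction and independence of $L_{U\tau_1}$).} By Fubini, for any bounded measurable $F$,
\[
\E\bigl[F(B_{U\tau_1}/\sqrt{\tau_1},\, 1/\sqrt{\tau_1},\, L_{U\tau_1})\bigr] \;=\; \E\!\left[\tau_1^{-1}\int_0^{\tau_1} F(B_s/\sqrt{\tau_1},\, 1/\sqrt{\tau_1},\, L_s)\, ds\right].
\]
The interval $[0,\tau_1]$ decomposes along the excursions of $B$ away from $0$, which form a Poisson point process $\sum_l \delta_{(l,e_l)}$ with intensity $dl\otimes n(de)$, where $n$ is It\^o's excursion measure; the lengths at levels $l \in (0,1]$ sum to $\tau_1$. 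I would then apply the compensation (Slivnyak--Mecke) formula, peeling off the excursion straddling the sampled time $U\tau_1$, which produces
\[
\E[F(\cdots)] \;=\; \int_0^1 dl \int n(de) \int_0^{V(e)} ds\, \E\!\left[\frac{F\bigl(e(s)/\sqrt{V(e)+\tilde\tau_1},\, 1/\sqrt{V(e)+\tilde\tau_1},\, l\bigr)}{V(e)+\tilde\tau_1}\right],
\]
where $V(e)$ is the excursion length and $\tilde\tau_1$ (the sum of the remaining excursion lengths at levels $\leq 1$) is an independent copy of $\tau_1$. For product $F(x,z,l)=f_1(x)f_2(z)f_3(l)$, the variable $l$ enters only through $f_3$ and the $l$-integral factors as $\int_0^1 f_3(l)\,dl$. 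This yields at once the uniformity of $L_{U\tau_1}$ on $[0,1]$ and its independence of the other two coordinates.

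\emph{Stage 2 (Joint law of the first two coordinates).} To identify the law of $(B_{U\tau_1}/\sqrt{\tau_1},1/\sqrt{\tau_1})$ with that of $(B_1/2,L_1)$, I would disintegrate $n$ over the excursion length $V$ (L\'evy density $(2\pi v^3)^{-1/2}$), represent an excursion of length $v$ as $\varepsilon\sqrt{v}\,r(\cdot/v)$ where $r$ is the normalized Brownian excursion (i.e.\ the $3$-dimensional Bessel bridge on $[0,1]$) and $\varepsilon=\pm 1$ a random sign, and apply the scaling $s=vu$ to collapse one integration variable. The further change $(v,\tilde\tau_1)\mapsto(\tilde v, z)$ with $\tilde v = v/(v+\tilde\tau_1)$ and $z = 1/\sqrt{v+\tilde\tau_1}$ diagonalizes the resulting integral; using the density of $r(u)$ at a fixed $u$ together with the short identity $\int_0^1 \rho_u(w)\,du = 4w\,e^{-2w^2}$ (where $\rho_u$ is the density of $r(u)$), the density of $(B_{U\tau_1}/\sqrt{\tau_1},1/\sqrt{\tau_1})$ at $(y,z)\in\mathbb{R}\times(0,\infty)$ reduces to
\[
q(y,z) \;=\; \frac{2|y|z}{\pi}\int_0^1 \frac{d\tilde v}{\tilde v^{3/2}(1-\tilde v)^{3/2}}\,\exp\!\Bigl(-\tfrac{2y^2}{\tilde v}-\tfrac{z^2}{2(1-\tilde v)}\Bigr).
\]
An integral of the form $\int_0^1 (u(1-u))^{-3/2}e^{-A/u-B/(1-u)}\,du$ can be evaluated in closed form by the change of variable $u\mapsto(1-u)/u$ and the modified Bessel identity $K_{1/2}(x)=\sqrt{\pi/(2x)}\,e^{-x}$, giving $\sqrt{\pi}\,(\sqrt{A}+\sqrt{B})(AB)^{-1/2}e^{-(\sqrt{A}+\sqrt{B})^2}$. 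Taking $A=2y^2$, $B=z^2/2$ yields $q(y,z)=2(z+2|y|)(2\pi)^{-1/2}\exp(-(z+2|y|)^2/2)$, which is exactly the density of $(B_1/2,L_1)$ obtained by scaling from the classical joint density $(l+|x|)(2\pi)^{-1/2}\exp(-(l+|x|)^2/2)$ of $(B_1,L_1)$.

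The main obstacle is the bookkeeping and final Bessel integral in Stage 2; Stage 1 is a clean consequence of the Poisson structure of the excursion process. The \emph{Mellin transform} keyword in the abstract suggests that the authors may instead prefer to identify the joint law by matching joint moments, using the scaling properties of $B$ and $\tau_1$ in place of the explicit density computation.
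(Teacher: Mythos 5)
Your plan is correct, and it takes a genuinely different route from the paper's proof. I checked the key ingredients: the Palm--Mecke step (with the leftover excursion lengths forming an independent copy of $\tau_1$), the identity $\int_0^1\rho_u(w)\,du=4we^{-2w^2}$ for the normalized excursion, the intermediate density $q(y,z)$, and the evaluation of the final integral via the $\tfrac12$-stable convolution $g_a*g_b=g_{a+b}$ with $g_a(t)=\frac{a}{\sqrt{2\pi t^3}}e^{-a^2/(2t)}$, which indeed returns $q(y,z)=\sqrt{\tfrac{2}{\pi}}(z+2|y|)e^{-(z+2|y|)^2/2}$, the density of $(\tfrac12 B_1,L_1)$; all of these are right. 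Your closing guess is also right: the paper identifies the law by matching joint Mellin transforms rather than densities. It reduces the theorem, by symmetry and a monotone class argument, to the single identity $\mathcal{M}_l(a,c):=\E[|B_{U\tau_1}/\sqrt{\tau_1}|^a\,\tau_1^{-c/2}\,\mathrm{1}_{\{L_{U\tau_1}\le l\}}]=l\,\E[|\tfrac12 B_1|^a L_1^c]$; the right-hand side is computed by randomizing time with an independent exponential (so that $(|B_{2\mathcal{E}}|,L_{2\mathcal{E}})$ becomes a pair of independent standard exponentials), and the left-hand side by writing $\tau_1^{-(a+c)/2-1}$ as a Gamma integral, using the subordinator decomposition $\tau_1=\tau_l+{}$independent remainder with Laplace exponent $\sqrt{2\lambda}$, then L\'evy's theorem plus optional stopping of the exponential martingale to pass to an integral over all time, and finally the joint density of $(|B_s|,L_s)$ together with an inverse-Gaussian integral. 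So no excursion theory appears in the paper; its tools are deliberately elementary, which is what the authors mean by presenting the ``most easily accessible'' of their several arguments. What your approach buys is structural insight and economy: Stage 1 explains in one stroke why $L_{U\tau_1}$ is uniform and independent of the other two coordinates (a fact the paper only isolates in its Appendix A, by a separate subordinator computation), and Stage 2 delivers the full joint density in a single pass instead of through transforms, at the price of invoking It\^o excursion theory and the Bessel-bridge description of the normalized excursion. The only work left on your side is to write out the Stage 2 bookkeeping explicitly (the Jacobian of $(v,\tilde\tau_1)\mapsto(\tilde v,z)$ is $2z^{-5}$), but since the formulas you state are the correct ones, there is no gap.
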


\noindent This result is quite remarkable in the sense that the distribution of the triplet is surprisingly simple. In particular, the marginal laws of the variables in the triplet are respectively Gaussian, absolute Gaussian and uniform distributions.\\

\noindent Let $(M_t,~t\geq 0)$ be the one sided supremum of $B$ and $(R_t,~t\geq 0)$ be a three dimensional Bessel process starting from $0$. We define the random variable $\gamma$ by $$\gamma=\text{sup}\{t\geq 0,~R_t=1\}$$ and the process $(J_u,~u\geq 0)$ by
$$J_u=\underset{t\geq u}{\text{inf}}R_t.$$ Using L\'evy's characterization of the reflecting Brownian motion and Pitman's representation of the three dimensional Bessel process, see \cite{pitman1975one} and for example \cite{revuz1999continuous}, the aficionados of Brownian motion will easily deduce the variants of Theorem \ref{theo1} stated in the following corollary, where $U$ is a uniform random variable independent of $B$ and $R$.

\begin{corollary}\label{cor1}
We have
\begin{align*}
(\frac{B_{UT_1}}{\sqrt{T_1}},\frac{1}{\sqrt{T_1}},M_{UT_1})\underset{\mathcal{L}}{=}(\Lambda L_1-\frac{1}{2}|B_1|,L_1,\Lambda),\\
(\frac{R_{U\gamma}}{\sqrt{\gamma}},\frac{1}{\sqrt{\gamma}},J_{U\gamma})\underset{\mathcal{L}}{=}(\Lambda L_1+\frac{1}{2}|B_1|,L_1,\Lambda),
\end{align*}
with $\Lambda$ a uniform random variable on $[0,1]$, independent of $(B_1,L_1).$
\end{corollary}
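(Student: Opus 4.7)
The plan is to derive both identities from Theorem \ref{theo1} by transporting it under the two classical identifications mentioned in the statement: L\'evy's identity for the first assertion, and Pitman's representation (combined with the first) for the second.

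For the first identity, L\'evy's theorem produces a standard Brownian motion $W$ with local time $L^W$ at $0$ such that $(M - B,\,M) \underset{\mathcal{L}}{=} (|W|,\,L^W)$ as processes. Since the hitting time $T_1$ of level $1$ by $B$ coincides with the hitting time of $1$ by the non-decreasing process $M$, it is transported jointly with the paths to $\tau_1^W = \inf\{t : L^W_t > 1\}$. Writing $B_t = M_t - (M_t - B_t)$, the triplet of interest is transported into
\[
\Bigl(\tfrac{L^W_{U\tau_1^W}}{\sqrt{\tau_1^W}} - \tfrac{|W_{U\tau_1^W}|}{\sqrt{\tau_1^W}},\;\tfrac{1}{\sqrt{\tau_1^W}},\;L^W_{U\tau_1^W}\Bigr).
\]
Applying Theorem \ref{theo1} to $W$ and taking absolute values in the first coordinate gives the joint identity
\[
\Bigl(\tfrac{|W_{U\tau_1^W}|}{\sqrt{\tau_1^W}},\,\tfrac{1}{\sqrt{\tau_1^W}},\,L^W_{U\tau_1^W}\Bigr) \;\underset{\mathcal{L}}{=}\; \bigl(\tfrac{1}{2}|B_1|,\,L_1,\,\Lambda\bigr),
\]
and since the triplet above is a deterministic function of these three coordinates, the first identity of the corollary follows.

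For the second identity, Pitman's representation yields $(R,J) \underset{\mathcal{L}}{=} (2M - B,\,M)$ as processes. Since $J$ first exceeds $1$ at the time $\gamma$ and $M$ first exceeds $1$ at $T_1$, the random time $\gamma$ is transported jointly to $T_1$. Hence
\[
\Bigl(\tfrac{R_{U\gamma}}{\sqrt{\gamma}},\,\tfrac{1}{\sqrt{\gamma}},\,J_{U\gamma}\Bigr) \;\underset{\mathcal{L}}{=}\; \Bigl(2\,\tfrac{M_{UT_1}}{\sqrt{T_1}} - \tfrac{B_{UT_1}}{\sqrt{T_1}},\,\tfrac{1}{\sqrt{T_1}},\,M_{UT_1}\Bigr),
\]
and substituting the first identity of the corollary (so that jointly $M_{UT_1}/\sqrt{T_1}$ has the law of $\Lambda L_1$ while $B_{UT_1}/\sqrt{T_1}$ has the law of $\Lambda L_1 - \tfrac{1}{2}|B_1|$) produces
\[
2\Lambda L_1 - \bigl(\Lambda L_1 - \tfrac{1}{2}|B_1|\bigr) = \Lambda L_1 + \tfrac{1}{2}|B_1|,
\]
which is exactly the second claimed identity.

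The one point requiring care is the joint coherence of the time changes: one must verify that $T_1 \underset{\mathcal{L}}{=} \tau_1^W$ and $\gamma \underset{\mathcal{L}}{=} T_1$ hold \emph{jointly} with the corresponding process identifications, so that the random sampling at $U\tau_1^W$, $UT_1$, and $U\gamma$ transports coherently. This is immediate once one observes that each of these random times is a measurable functional of a process that has already been identified in law at the path level.
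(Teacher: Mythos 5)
Your proof is correct and follows exactly the route the paper indicates (and leaves to the reader): transporting Theorem \ref{theo1} via L\'evy's identity $(M-B,M)\underset{\mathcal{L}}{=}(|B|,L)$ with $T_1$ corresponding to $\tau_1$, and then via Pitman's representation $(2M-B,M)\underset{\mathcal{L}}{=}(R,J)$ with $T_1$ corresponding to $\gamma$. Your attention to the joint transport of the random times as measurable functionals of the identified processes is precisely the point the paper trusts the ``aficionados'' to handle, and you handle it correctly.
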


\noindent We clearly see that the distribution of the couple $(B_1,L_1)$ plays an essential role in the description of the laws of the triplets in Theorem \ref{theo1} and Corollary \ref{cor1}. Recall that for $s>0$, the density of $(B_s,L_s)$ at point $(x,l)\in\mathbb{R}\times\mathbb{R}^+$ is given by
\begin{equation}\label{dens}
\frac{1}{\sqrt{2\pi s^3}}(|x|+l)\text{exp}\big(-\frac{(l+|x|)^2}{2s}\big).
\end{equation}
From this expression, we deduce for fixed time $s\geq 0$ the useful factorization: 
$$(|B_s|,L_s)\underset{\mathcal{L}}{=}R_s(1-U,U).$$

\noindent Using elementary computations, this last expression enables to obtain interesting consequences of Theorem \ref{theo1} and Corollary \ref{cor1}. For example, we give in the following corollary unexpected equalities in law and independence properties for some Brownian functionals.

\begin{corollary}\label{cor2}
The three triplets
\begin{align*}
&\big(|B_{U\tau_1}|,L_{U\tau_1},\frac{1+2|B_{U\tau_1}|}{\sqrt{\tau_1}}\big),\\
&\big(M_{UT_1}-B_{UT_1},M_{UT_1},\frac{1+2(M_{UT_1}-B_{UT_1})}{\sqrt{T_1}}\big),\\
&\big(R_{U\gamma}-J_{U\gamma},J_{U\gamma},\frac{1+2(R_{U\gamma}-J_{U\gamma})}{\sqrt{\gamma}}\big)
\end{align*}
are identically distributed and their common law is that of the triplet of independent variables
$$\big(\frac{1}{2}(\frac{1}{U}-1),\Lambda,R_1\big).$$
\end{corollary}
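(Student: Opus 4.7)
The plan is to reduce all three triplets to a common expression in terms of $(B_1, L_1, \Lambda)$ using Theorem \ref{theo1} and Corollary \ref{cor1}, and then apply the factorization $(|B_s|, L_s) \stackrel{\mathcal{L}}{=} R_s(1-U, U)$ to get the final form with independent components. This is essentially an algebraic repackaging exercise, with the punchline being the elegant identity $|B_1| + L_1 \stackrel{\mathcal{L}}{=} R_1$.

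First I would treat the pseudo-Brownian bridge triplet. From Theorem \ref{theo1}, the joint identity
$$\Bigl(\frac{B_{U\tau_1}}{\sqrt{\tau_1}}, \frac{1}{\sqrt{\tau_1}}, L_{U\tau_1}\Bigr) \stackrel{\mathcal{L}}{=} \Bigl(\frac{1}{2}B_1, L_1, \Lambda\Bigr)$$
yields, by dividing the first coordinate by the second, $|B_{U\tau_1}| \stackrel{\mathcal{L}}{=} |B_1|/(2L_1)$ jointly with $L_{U\tau_1} \stackrel{\mathcal{L}}{=} \Lambda$. The third component becomes
$$\frac{1+2|B_{U\tau_1}|}{\sqrt{\tau_1}} \stackrel{\mathcal{L}}{=} \Bigl(1+\frac{|B_1|}{L_1}\Bigr)L_1 = L_1 + |B_1|.$$
Thus the first triplet has the law of $(|B_1|/(2L_1),\ \Lambda,\ L_1+|B_1|)$ with $\Lambda$ independent of $(B_1,L_1)$.

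Next I would do exactly the same manipulation for the other two triplets using Corollary \ref{cor1}. For the Brownian case, from $(B_{UT_1}/\sqrt{T_1}, 1/\sqrt{T_1}, M_{UT_1}) \stackrel{\mathcal{L}}{=} (\Lambda L_1 - |B_1|/2, L_1, \Lambda)$ one gets $B_{UT_1} \stackrel{\mathcal{L}}{=} \Lambda - |B_1|/(2L_1)$ and $M_{UT_1} \stackrel{\mathcal{L}}{=} \Lambda$, so $M_{UT_1} - B_{UT_1} \stackrel{\mathcal{L}}{=} |B_1|/(2L_1)$, and the last coordinate again becomes $L_1 + |B_1|$. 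An identical computation for the Bessel case, using $R_{U\gamma} - J_{U\gamma} \stackrel{\mathcal{L}}{=} |B_1|/(2L_1)$, delivers the same triplet $(|B_1|/(2L_1),\ \Lambda,\ L_1+|B_1|)$.

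The final step is to invoke the factorization recalled in the excerpt: $(|B_1|, L_1) \stackrel{\mathcal{L}}{=} R_1(1-U, U)$ with $U$ uniform on $[0,1]$ independent of $R_1$. Substituting gives
$$\frac{|B_1|}{2L_1} \stackrel{\mathcal{L}}{=} \frac{1-U}{2U} = \frac{1}{2}\Bigl(\frac{1}{U}-1\Bigr), \qquad L_1 + |B_1| \stackrel{\mathcal{L}}{=} R_1(U + (1-U)) = R_1,$$
and since $R_1$ is independent of $U$ by the factorization and $\Lambda$ is independent of $(B_1, L_1) = (\pm R_1(1-U), R_1 U)$, the three components $\frac{1}{2}(1/U - 1)$, $\Lambda$, $R_1$ are jointly independent. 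The only step requiring a bit of care is the verification that the identities extracted above are truly joint (not just marginal), but this is immediate because each is obtained coordinate-wise from a single joint identity in law; no genuine obstacle arises.
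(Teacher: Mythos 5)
Your proposal is correct and follows exactly the route the paper intends: it deduces the common law of the three triplets from the joint identities of Theorem \ref{theo1} and Corollary \ref{cor1} (transferring them coordinate-wise through the maps $(x,z,m)\mapsto(x/z,\,m,\,(1+2x/z)z)$, etc.), and then concludes via the factorization $(|B_1|,L_1)\underset{\mathcal{L}}{=}R_1(1-U,U)$, which the paper itself cites as the source of these ``elementary computations''. Nothing further is needed.
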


\noindent As a general comment, we should mention that we in fact derived several ways to obtain the preceding results. We present here
the approach that we think is the most easily accessible.\\

\noindent The rest of the paper is organized as follows. Section \ref{proof} contains the proof of Theorem \ref{theo1}.
The law of the random variable $B_{UT_1}/\sqrt{T_1}$ is investigated in Section \ref{revisit} and some applications 
of Theorem \ref{theo1} and Corollary \ref{cor1} can be found in Section \ref{appli}. Some additional remarks are gathered in an appendix.

\section{Proof of Theorem \ref{theo1}}\label{proof}

\noindent In this section, we give the proof of our main result, Theorem \ref{theo1}. 

\subsection{Step 1: Introducing a Mellin type transform}

First remark that using the symmetry of the Brownian motion, Theorem \ref{theo1} is equivalent to the equality in law:
\begin{equation}\label{equiv}
(|\frac{B_{U\tau_1}}{\sqrt{\tau_1}}|,\frac{1}{\sqrt{\tau_1}},L_{U\tau_1})\underset{\mathcal{L}}{=}(\frac{1}{2}|B_1|,L_1,\Lambda).
\end{equation}
We now introduce the function $\mathcal{M}_l(a,c)$ which is defined for $a\geq 0$, $c\geq 0$ and $l\leq 1$ by 
$$\mathcal{M}_l(a,c)=\E\big[|\frac{B_{U\tau_1}}{\sqrt{\tau_1}}|^a|\frac{1}{\sqrt{\tau_1}}|^c\mathrm{1}_{\{L_{U\tau_1}\leq l\}}\big].$$
Using properties of the Mellin transform together with a monotone class argument, we easily get that proving \eqref{equiv} is equivalent to show the following equality:
\begin{equation}\label{equiv2}
\mathcal{M}_l(a,c)=l\E\big[|\frac{1}{2}B_1|^a|L_1|^c\big].
\end{equation}

\subsection{Step 2: The Mellin transform of $(B_1,L_1)$}\label{melbl}
In this second step, we give the Mellin transform of the couple $(B_1,L_1)$. Having an expression for this functional is of course helpful in order to prove Equality \eqref{equiv2}.
\subsubsection{The result}
The Mellin transform of the couple $(|B_1|,L_1)$ is given in the following proposition. 
\begin{proposition}\label{mel}
For $a>0$ and $c>0$, we have
$$\E[|B_1|^a(L_1)^c]=\frac{\Gamma(1+a)\Gamma(1+c)}{2^{\frac{a+c}{2}}\Gamma(1+\frac{a+c}{2})},$$
where $\Gamma$ denotes the classical gamma function.
\end{proposition}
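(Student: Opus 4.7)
My plan is to compute the expectation directly using the factorization $(|B_1|,L_1)\underset{\mathcal{L}}{=}R_1(1-U,U)$ stated just before the proposition, which the paper has already derived from the explicit density \eqref{dens}. This identity decouples the radial and angular parts: with $R_1$ and $U$ independent, one gets
$$\E[|B_1|^a L_1^c]=\E[R_1^{a+c}]\cdot\E[(1-U)^a U^c].$$
The uniform factor is immediate, giving the Beta integral $B(a+1,c+1)=\Gamma(a+1)\Gamma(c+1)/\Gamma(a+c+2)$.

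Next I would compute $\E[R_1^{a+c}]$ using the well-known density $\sqrt{2/\pi}\,r^2 e^{-r^2/2}$ of the three-dimensional Bessel process at time $1$. The substitution $v=r^2/2$ reduces the integral to a gamma function, yielding
$$\E[R_1^{a+c}]=\sqrt{\frac{2}{\pi}}\int_0^\infty r^{a+c+2}e^{-r^2/2}\,dr=\frac{2^{(a+c+2)/2}}{\sqrt{\pi}}\,\Gamma\!\Big(\frac{a+c+3}{2}\Big).$$
Multiplying the two contributions gives a closed form, and what remains is to check that it matches the right-hand side of the proposition.

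The final step, which is where the combinatorial simplification happens, is to apply Legendre's duplication formula $\Gamma(z)\Gamma(z+\tfrac12)=2^{1-2z}\sqrt{\pi}\,\Gamma(2z)$ with $z=1+(a+c)/2$. This identifies $\Gamma((a+c+3)/2)\,\Gamma(1+(a+c)/2)$ with $2^{-(a+c+1)}\sqrt{\pi}\,\Gamma(a+c+2)$, which precisely cancels the awkward factors and produces the advertised expression $\Gamma(1+a)\Gamma(1+c)/(2^{(a+c)/2}\Gamma(1+(a+c)/2))$.

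I do not expect any serious obstacle: the factorization is handed to us, the Bessel moment is a textbook computation, and the duplication formula is exactly shaped to bridge the two expressions. If one preferred to avoid invoking the factorization, one could integrate directly against the density \eqref{dens} on $\mathbb{R}_+\times\mathbb{R}_+$ (with a factor $2$ for symmetry) and substitute $x=r(1-u)$, $l=ru$ to obtain the same separation; this amounts to the same calculation but is slightly more tedious. Either way, the duplication formula is the only non-mechanical ingredient.
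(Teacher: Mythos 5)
Your proof is correct, but it follows a genuinely different route from the paper. You exploit the polar-type factorization $(|B_1|,L_1)\underset{\mathcal{L}}{=}R_1(1-U,U)$ (which the paper derives from the density \eqref{dens} but does not actually use at this point), split the Mellin transform into a Bessel moment $\E[R_1^{a+c}]$ times a Beta integral $\E[(1-U)^aU^c]$, and then reconcile the product with the stated formula via Legendre's duplication identity; I checked the bookkeeping and the powers of $2$ and the $\sqrt{\pi}$ cancel exactly as you claim. The paper instead randomizes time by an independent exponential: it proves the identity $(|B_{2\mathcal{E}}|,L_{2\mathcal{E}})\underset{\mathcal{L}}{=}(\mathcal{E},\mathcal{E}')$ (via the inverse Gaussian integral applied to \eqref{dens}) and then uses Brownian scaling, $\sqrt{2\mathcal{E}}\,(|B_1|,L_1)\underset{\mathcal{L}}{=}(|B_{2\mathcal{E}}|,L_{2\mathcal{E}})$, so that the desired Mellin transform pops out after dividing by $\E[(2\mathcal{E})^{(a+c)/2}]=2^{(a+c)/2}\Gamma(1+\tfrac{a+c}{2})$. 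The trade-off is that the paper's exponential-subordination argument needs no gamma-function gymnastics (the factor $\Gamma(1+\tfrac{a+c}{2})$ appears for free from the scaling variable) and yields the nice intermediate fact that $(|B_{2\mathcal{E}}|,L_{2\mathcal{E}})$ is a pair of independent exponentials, whereas your computation is more self-contained and direct from the density, at the cost of invoking the duplication formula as the one non-mechanical ingredient. There is no circularity in your use of the factorization, since it rests only on the classical density \eqref{dens}, so the argument stands as a complete alternative proof.
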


\subsubsection{Technical Lemma}

Before starting the proof of Proposition \ref{mel}, we give a useful lemma.
\begin{lemma}\label{exp}
Let $\mathcal{E}$ and $\mathcal{E}'$ be two independent standard exponential variables, independent of $B$. The following equality in law holds: $$(|B_{2\mathcal{E}}|,L_{2\mathcal{E}})\underset{\mathcal{L}}{=}(\mathcal{E},\mathcal{E}').$$ 
\end{lemma}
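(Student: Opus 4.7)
The plan is to compute the joint density of $(|B_{2\mathcal{E}}|, L_{2\mathcal{E}})$ explicitly by conditioning on the value of $2\mathcal{E}$, and then to recognize the result as the joint density of two independent standard exponentials.

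First, from \eqref{dens}, the joint density of $(|B_s|, L_s)$ at $(y, l) \in \mathbb{R}_+^2$ equals
\[
\frac{2(y+l)}{\sqrt{2\pi s^3}}\exp\!\left(-\frac{(y+l)^2}{2s}\right),
\]
the factor $2$ arising from the symmetry of $B$. Since $2\mathcal{E}$ is independent of $B$ and has density $\tfrac{1}{2}e^{-s/2}$ on $\mathbb{R}_+$, conditioning on the value of $2\mathcal{E}$ yields
\[
f_{(|B_{2\mathcal{E}}|, L_{2\mathcal{E}})}(y, l)=\int_0^{\infty}\frac{(y+l)}{\sqrt{2\pi s^3}}\exp\!\left(-\frac{(y+l)^2}{2s}-\frac{s}{2}\right)ds.
\]

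Next, I would observe that for fixed $u = y + l > 0$ the factor $\frac{u}{\sqrt{2\pi s^3}}\exp(-u^2/(2s))$ is exactly the density at time $s$ of $T_u$, the first passage time of $B$ at level $u$. The integral therefore reduces to $\E[\exp(-T_u/2)]$, and the well-known Laplace transform $\E[\exp(-\lambda T_u)] = \exp(-u\sqrt{2\lambda})$ at $\lambda = 1/2$ delivers
\[
f_{(|B_{2\mathcal{E}}|, L_{2\mathcal{E}})}(y, l)=e^{-(y+l)}=e^{-y}\,e^{-l},\qquad (y,l)\in\mathbb{R}_+^2,
\]
which is precisely the density of a pair of independent standard exponentials. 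This establishes the equality in law.

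I don't anticipate a real obstacle: once \eqref{dens} is in hand, the proof collapses to identifying one standard Laplace transform. Should one prefer to avoid invoking $T_u$, the same integral can be evaluated directly by the substitution $s = 1/t$ together with the elementary formula $\int_0^{\infty} t^{-1/2}\exp(-at - b/t)\,dt = \sqrt{\pi/a}\,e^{-2\sqrt{ab}}$, leading to the same conclusion.
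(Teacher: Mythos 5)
Your proof is correct and follows essentially the same route as the paper: condition on the exponential time, use the joint density \eqref{dens}, and evaluate the resulting integral in the time variable by recognizing the first-passage (inverse Gaussian) density, i.e. $\E[\mathrm{e}^{-T_u/2}]=\mathrm{e}^{-u}$, which yields $\mathrm{e}^{-(y+l)}$. The only difference is one of packaging: you identify the joint density of $(|B_{2\mathcal{E}}|,L_{2\mathcal{E}})$ directly, whereas the paper runs the identical computation inside a Mellin transform comparison with $\Gamma(1+a)\Gamma(1+c)$; your version is, if anything, slightly more self-contained, as it does not rely on the Mellin transform characterizing the law.
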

\noindent This result is in fact quite well known. However, for sake of completeness we give its proof here.
\begin{proof}
To prove lemma \ref{exp}, we compute the Mellin transform of the couple $(|B_{2\mathcal{E}}|,L_{2\mathcal{E}})$. Let $a>0$ and $c>0$. From the law of the couple $(|B_t|,L_t)$ obtained from \eqref{dens}, we get
\begin{align*}
\E[|B_{2\mathcal{E}}|^a|L_{2\mathcal{E}}|^c]&=\frac{1}{2}\int_0^{+\infty}dt\text{e}^{-t/2}\E[|B_{t}|^a|L_{t}|^c]\\
&=\frac{1}{2}\int_0^{+\infty}dt\text{e}^{-t/2}\int_0^{+\infty}dx\int_0^{+\infty}dl
\sqrt{\frac{2}{\pi t^3}}x^al^c(x+l)\text{e}^{-\frac{(l+x)^2}{2t}}\\
&=\int_0^{+\infty}dx\int_0^{+\infty}dlx^al^c\int_0^{+\infty}\frac{dt}{\sqrt{2\pi t^3}}\text{e}^{-t/2}
(x+l)\text{e}^{-\frac{(l+x)^2}{2t}}.
\end{align*} 
Using the expression of the density of an inverse Gaussian random variable, it is easily seen that the integral in $(dt)$ is equal to $\text{exp}\big(-(x+l)\big)$. Therefore, the remaining double integral is equal to $\Gamma(1+a)\Gamma(1+c)$. This gives the result since
$$\E[|\mathcal{E}|^a|\mathcal{E}'|^c]=\Gamma(1+a)\Gamma(1+c).$$
\end{proof}

\subsubsection{Proof of Proposition \ref{mel}}
We now give the proof of Proposition \ref{mel}. First remark that, by scaling, we have
$$\sqrt{2\mathcal{E}}(|B_1|,L_1)\underset{\mathcal{L}}{=}(|B_{2\mathcal{E}}|,L_{2\mathcal{E}}).$$
Thus, using Lemma \ref{exp}, we get
$$\E[(2\mathcal{E})^{\frac{a+c}{2}}]\E[|B_1|^aL_1^c]=\E[|\mathcal{E}|^a|\mathcal{E}'|^c]=\Gamma(1+a)\Gamma(1+c).$$
We eventually obtain the result since $$\E[(2\mathcal{E})^{\frac{a+c}{2}}]=2^{\frac{a+c}{2}}\Gamma(1+\frac{a+c}{2}).$$ 

\subsection{Step 3: End of the proof of Theorem \ref{theo1}}

According to Proposition \ref{mel}, in order to prove Theorem \ref{theo1}, it suffices to show that
\begin{equation}\label{identity}
\mathcal{M}_l(a,c)=l\frac{\Gamma(1+a)\Gamma(1+c)}{2^{\frac{a+c}{2}}\Gamma(1+\frac{a+c}{2})2^a}.
\end{equation}
To this purpose, let us write $\mathcal{M}_l(a,c)$ under the form
\begin{align*}
\mathcal{M}_l(a,c)&=\E\big[\frac{1}{\tau_1^{\frac{a+c}{2}+1}}\int_0^{\tau_1}ds|B_s|^a\mathrm{1}_{\{L_s\leq l\}}\big]\\
&=\E\big[\frac{1}{\tau_1^{\frac{a+c}{2}+1}}\int_0^{\tau_l}ds|B_s|^a\big].
\end{align*}
Then, using the definition of the Gamma function and a change of variable, we easily get
\begin{equation}\label{defmel}
\mathcal{M}_l(a,c)=\frac{1}{2^{\frac{a+c}{2}}\Gamma(1+\frac{a+c}{2})}\int_0^{+\infty}d\mu\mu^{1+a+c}\E\big[\int_0^{\tau_l}ds|B_s|^a\text{e}^{-\frac{\mu^2}{2}\tau_1}\big].
\end{equation}
\noindent Now recall that the process $\tau_l$ is a subordinator with Laplace exponent at point $\lambda\in\mathbb{R}^{+*}$ equal to $\sqrt{2\lambda}$, see \cite{revuz1999continuous}. Therefore,
\begin{align*}
\E\big[\int_0^{\tau_l}ds|B_s|^a\text{e}^{-\frac{\mu^2}{2}\tau_1}\big]&=\E\big[\int_0^{\tau_l}ds|B_s|^a\text{e}^{-\frac{\mu^2}{2}\tau_l-\mu(1-l)}\big]\\
&=\text{e}^{-\mu}\E\big[\int_0^{\tau_l}ds|B_s|^a\text{e}^{-\frac{\mu^2}{2}\tau_l-\mu(|B_{\tau_l}|-L_{\tau_l})}\big].
\end{align*}
Then, using L\'evy's theorem, see \cite{revuz1999continuous}, together with the optional stopping theorem, we get

$$
\E\big[\int_0^{\tau_l}ds|B_s|^a\text{e}^{-\frac{\mu^2}{2}\tau_1}\big]=\text{e}^{-\mu}\E\big[\int_0^{+\infty}ds|B_s|^a\text{e}^{\mu(L_s-|B_s|)-\frac{\mu^2s}{2}}\mathrm{1}_{\{L_s\leq l\}}\big].
$$
With the help of the joint law of $(|B_s|,L_s)$ given in \eqref{dens}, we obtain that this last quantity is also equal to
$$\text{e}^{-\mu}\int_0^{+\infty}dx\int_0^ldmx^a\text{e}^{\mu(m-x)}2\int_0^{+\infty}\frac{ds}{\sqrt{2\pi s^3}}(m+x)\text{e}^{-\frac{(m+x)^2}{2s}}\text{e}^{-\frac{\mu^2 s}{2}}.$$
Using again the expression of the density of an inverse Gaussian random variable, we see that the integral in $(ds)$ is equal to $\text{exp}\big(-\mu(x+m)\big)$.
Consequently, we obtain
\begin{align*}
\E\big[\int_0^{\tau_l}ds|B_s|^a\text{e}^{-\frac{\mu^2}{2}\tau_1}\big]&=2\text{e}^{-\mu}\int_0^{+\infty}dx\int_0^ldmx^a\text{e}^{-2\mu x}\\
&=2l\text{e}^{-\mu}\frac{\Gamma(1+a)}{(2\mu)^{a+1}}.
\end{align*}
Plugging this equality into Equation \eqref{defmel} gives
$$\mathcal{M}_l(a,c)=\frac{l}{2^{\frac{a+c}{2}}\Gamma(1+\frac{a+c}{2})}\frac{\Gamma(1+a)\Gamma(1+c)}{2^a},$$
which is the desired identity \eqref{identity}.

\section{The properties of the law of $\alpha$ revisited}\label{revisit}
In this section, we focus on the random variable
$$\alpha=\frac{B_{UT_1}}{\sqrt{T_1}},$$ 
with $U$ a uniform random variable on $[0,1]$ independent of $B$.
\subsection{Two equivalent characterizations of the distribution of $\alpha$}
In \cite{elie2013expectation}, we describe in term of its density the law of the variable 
$\alpha.$ In particular, we show that this variable is centered. This characterization of the distribution of $\alpha$ is obtained thanks to the computation of the Mellin transform of the positive and negative parts of $\alpha$. More precisely, we have for $m>0$
\begin{align*}
\E[(\alpha_+)^m]&=\E[|N|^m]2\int_0^1dz\frac{z^{1+m}}{(1+2z)}\\
\E[(\alpha_-)^m]&=\E[|\frac{N}{2}|^m]\big(\frac{\text{log}(3)}{2}\big).
\end{align*}
This leads to the following description of the law of $\alpha$, which is of course equivalent to that given in \cite{elie2013expectation}.
\begin{proposition}\label{desc}
Let $N$ and $Z$ be two independent random variables with $N$ standard Gaussian and $Z$ with density
$$\frac{2z}{(1-\frac{\emph{log}(3)}{2})(1+2z)}\mathrm{1}_{\{0<z<1\}}.$$
We have the following equalities in law:
\begin{align*}
&(\alpha|\alpha>0)\underset{\mathcal{L}}{=}|N|Z\\
&(-\alpha|\alpha<0)\underset{\mathcal{L}}{=}\frac{1}{2}|N|\\
&\PP[\alpha>0]=1-\frac{\emph{log}(3)}{2}.
\end{align*}
\end{proposition}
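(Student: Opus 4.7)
The proposition is essentially a repackaging of the two Mellin formulas stated just above it, so the plan is to extract the three pieces of information (the probability, the positive conditional law, the negative conditional law) directly from those formulas.

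First I would obtain $\PP[\alpha>0]$ by specializing the identity for $\E[(\alpha_+)^m]$ at $m=0$. Since $\E[|N|^0]=1$, this gives
\[
\PP[\alpha>0]=2\int_0^1\frac{z}{1+2z}\,dz=\int_0^1\Big(1-\frac{1}{1+2z}\Big)dz=1-\frac{\log 3}{2},
\]
and by complementarity $\PP[\alpha<0]=\log(3)/2$ (the event $\{\alpha=0\}$ has probability zero, which can either be checked directly from the continuity of the laws involved or read off from the positivity of the density description in \cite{elie2013expectation}).

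Next I would treat the negative part, which is immediate: dividing the second Mellin identity by $\PP[\alpha<0]=\log(3)/2$ yields
\[
\E\big[(-\alpha)^m\,\big|\,\alpha<0\big]=\E\big[|N/2|^m\big]\quad\text{for all }m>0.
\]
Since Mellin transforms characterize distributions of non-negative random variables, this gives $(-\alpha\mid\alpha<0)\overset{\mathcal{L}}{=}|N|/2$.

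For the positive part, divide the first Mellin identity by $\PP[\alpha>0]=1-\log(3)/2$ to obtain
\[
\E\big[\alpha^m\,\big|\,\alpha>0\big]=\E[|N|^m]\cdot\int_0^1 z^m\cdot\frac{2z}{(1-\log(3)/2)(1+2z)}\,dz.
\]
The function $z\mapsto \frac{2z}{(1-\log(3)/2)(1+2z)}\mathbf{1}_{\{0<z<1\}}$ is a probability density (its integral is exactly $1$ by the computation in step one), so calling $Z$ a random variable with this density, the right-hand side is $\E[|N|^m]\E[Z^m]=\E[(|N|Z)^m]$ when $N$ and $Z$ are taken independent. Mellin uniqueness on $\mathbb{R}^+$ then gives $(\alpha\mid\alpha>0)\overset{\mathcal{L}}{=}|N|Z$, completing the three claims of the proposition.

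There is no real obstacle here; the only things to check are the elementary integral $\int_0^1 2z/(1+2z)\,dz=1-\log(3)/2$ and the observation that this same integral is what normalises the candidate density of $Z$, ensuring that the formulas for the conditional moments are genuine Mellin transforms of probability laws.
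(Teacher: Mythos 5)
Your proof is correct and follows essentially the same route as the paper, which presents Proposition \ref{desc} as a direct consequence of the two Mellin formulas for $\E[(\alpha_+)^m]$ and $\E[(\alpha_-)^m]$, exactly as you do: normalize them into conditional Mellin transforms, recognize the density of $Z$, and invoke Mellin (moment) determinacy. The only cosmetic point is that those formulas are stated for $m>0$, so $\PP[\alpha>0]$ should be obtained by letting $m\to 0^+$ (dominated convergence, together with $\PP[\alpha=0]=0$) rather than by literally setting $m=0$; this changes nothing of substance.
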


\noindent We wish to compare this characterization with the one obtained from Corollary \ref{cor1}, namely
\begin{equation}\label{desc2}
\alpha\underset{\mathcal{L}}{=}\Lambda L_1-\frac{1}{2}|B_1|.
\end{equation}
We first note that the centering property is easily recovered since
$$\E[\Lambda L_1-\frac{1}{2}|B_1|]=\frac{1}{2}\E[L_1-|B_1|]=0.$$
Moreover, the second moment can also be computed without difficulty. Indeed,
$$
\E[(\Lambda L_1-\frac{1}{2}|B_1|)^2]=\frac{1}{3}\E[L_1^2]-\frac{1}{2}\E[L_1|B_1|]+\frac{1}{4}.
$$
Then, using for example the formula for the Mellin transform of the couple $(|B_1|,L_1)$ given in Proposition \ref{mel}, we get
$$\E[(\Lambda L_1-\frac{1}{2}|B_1|)^2]=\frac{1}{3}.
$$
We now show that Proposition \ref{desc} and Equation \eqref{desc2} match. This is deduced from the following elementary description
of the random variable $A$ defined by
$$A=\Lambda U-\frac{1}{2}(1-U),$$
with $\Lambda$ and $U$ two independent uniform random variables on $[0,1]$.
\begin{proposition}\label{desc3}
Let $Z$ be the random variable defined in Proposition \ref{desc} and $V$ a uniform variable on $[0,1]$, independent of $Z$. We have
\begin{align*}
&(A|A>0)\underset{\mathcal{L}}{=}VZ\\
&(-A|A<0)\underset{\mathcal{L}}{=}\frac{1}{2}V\\
&\PP[A>0]=1-\frac{\emph{log}(3)}{2}.
\end{align*}
\end{proposition}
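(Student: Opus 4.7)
The plan is to prove all three assertions by direct computation, exploiting the fact that $(\Lambda, U)$ is uniformly distributed on the unit square $[0,1]^2$. The key preliminary observation is that $A > 0$ is equivalent to $\Lambda > (1-U)/(2U)$, and this constraint is meaningful only when $(1-U)/(2U) < 1$, i.e.\ when $U > 1/3$. Once this is noticed, every quantity reduces to a single or double integral with logarithmic primitives.

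First I would compute $\PP[A>0]$ as
\begin{equation*}
\PP[A>0] \;=\; \int_{1/3}^{1}\!\Bigl(1-\frac{1-u}{2u}\Bigr)\,du \;=\; \int_{1/3}^{1}\frac{3u-1}{2u}\,du,
\end{equation*}
which evaluates to $1 - \tfrac{1}{2}\log 3$, giving the third assertion. For the negative part, I would fix $t\in[0,1/2]$ and compute
\begin{equation*}
\PP[-A>t] \;=\; \PP\!\Bigl[\Lambda U < \tfrac{1-U}{2}-t\Bigr],
\end{equation*}
splitting the integral over $U$ according to whether the $\Lambda$-constraint is trivial ($u < (1-2t)/3$) or nontrivial ($(1-2t)/3 < u < 1-2t$). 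A short calculation shows that the two logarithmic pieces collapse to $\tfrac{1}{2}(1-2t)\log 3$; dividing by $\PP[A<0] = \tfrac{1}{2}\log 3$ yields $\PP[-A>t \mid A<0] = 1-2t$, which is the law of $V/2$.

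For the positive part, I would compute the unconditional tail $\PP[A > t]$ for $t\in(0,1)$ by the same slicing argument (now over $u > (2t+1)/3$), obtaining
\begin{equation*}
\PP[A>t] \;=\; 1 - t - \tfrac{2t+1}{2}\log\!\Bigl(\tfrac{3}{2t+1}\Bigr),
\end{equation*}
and differentiating to get the density $t \mapsto \log(3/(2t+1))$ on $(0,1)$. On the other side, the density of $VZ$ is $f_{VZ}(t) = \int_t^1 z^{-1} f_Z(z)\,dz$, and the factor $z$ in the numerator of $f_Z$ cancels the $z^{-1}$, leaving $\int_t^1 dz/(1+2z)$ times a constant. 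Integrating gives precisely $\log(3/(1+2t))/(1 - \tfrac{1}{2}\log 3)$, which matches the conditional density $f_{A\mid A>0}$ coming from Step~1.

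There is no real obstacle beyond careful book-keeping: the only subtle point is identifying the correct range $u \in ((2t+1)/3,\,1)$ (respectively $u \in (0,1-2t)$) on which the linear-in-$\Lambda$ constraint is neither empty nor automatic, and ensuring that the endpoint $u = (2t+1)/3$ supplies the $\log$ term via $\int du/u$. Everything else is the elementary evaluation of a logarithmic integral.
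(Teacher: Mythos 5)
Your proposal is correct: all three computations (the tail $\PP[A>t]$, the tail $\PP[-A>t]$, and $\PP[A>0]$) check out, and the comparison with the density of $VZ$ via $f_{VZ}(t)=\int_t^1 z^{-1}f_Z(z)\,dz$ is exactly what is needed. This is essentially the paper's argument — the paper conditions on $U$ and writes the density of $A$ directly as $u(x)=\E\bigl[\tfrac{1}{U}\mathrm{1}_{\{\frac{1+2x}{3}\leq U\leq 1+2x\}}\bigr]$, whereas you obtain the same logarithmic density by computing tail probabilities and differentiating, a purely cosmetic difference.
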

\begin{proof}
Let $f$ be a positive measurable function. The density of $A$ can be computed directly as follows. We have
\begin{align*}
\E[f(A)]&=\int_0^1d\lambda\E\big[f\big(\lambda U-\frac{1}{2}(1-U)\big)\big]\\
&=\int_{\mathbb{R}}dxf(x)u(x),
\end{align*}
with
$$u(x)=\E\big[\frac{1}{U}\mathrm{1}_{\{\frac{1+2x}{3}\leq U\leq 1+2x\}}\big].$$
Now, note that on the one hand
\begin{itemize}
\item[-] if $-\frac{1}{2}\leq x\leq 0$, 
$$u(x)=\int_{\frac{1+2x}{3}}^{1+2x}\frac{du}{u}=\text{log}(3),$$
\item[-] if $0\leq x\leq 1$,  
$$u(x)=\int_{\frac{1+2x}{3}}^{1}\frac{du}{u}=\text{log}(\frac{3}{1+2x}),$$
\item[-] if $x\leq -\frac{1}{2}$ or $x>1$,
$$u(x)=0.$$
\end{itemize}
On the other hand, it is easily seen that the density of $VZ$ is
$$\frac{1}{1-\frac{\text{log}(3)}{2}}\text{log}\big(\frac{3}{1+2x}\big).$$
This ends the proof of Proposition \ref{desc3}.
\end{proof}

\noindent Let us now start from Equation \eqref{desc2} and recover Proposition \ref{desc}. From Equation \eqref{desc2}, with our usual notation, we get
$$\alpha\underset{\mathcal{L}}{=}R_1A.$$
Thus, $$(\alpha|\alpha>0)\underset{\mathcal{L}}{=}(R_1A|A>0).$$
From Proposition \ref{desc3}, we obtain
$$(R_1A|A>0)\underset{\mathcal{L}}{=}R_1VZ\underset{\mathcal{L}}{=}|N|Z,$$
which gives the first result in Proposition \ref{desc}. The two other results are proved similarly, with the help of Proposition \ref{desc3}.

\subsection{A warning and some developments around Proposition \ref{desc}}
Since $1/\sqrt{T_1}$ is distributed as $|N|$, from Proposition \ref{desc}, it may be tempting to think that $$(B_{UT_1}|B_{UT_1}>0)$$ is distributed as $Z$ and is independent of $T_1$. However, this is wrong. This incited us to look at the joint law of $1/\sqrt{T_1}$ and $B_{UT_1}$. Indeed, although encoded in Corollary \ref{cor1}, it may deserve an explicit presentation which we give in the following theorem.
\begin{theorem}\label{descB}
$~~$
\begin{itemize}
\item Let $p\geq 0$. For $\phi$ a positive measurable function, the following formulas hold:
\begin{align*}
\E\big[\frac{1}{(\sqrt{T_1})^p}\phi(B_{UT_1})\mathrm{1}_{\{B_{UT_1}>0\}}\big]&=c_p\int_0^1db\phi(b)\big(1-\frac{1}{(3-2b)^{p+1}}\big)\\
\E\big[\frac{1}{(\sqrt{T_1})^p}\phi(B_{UT_1})\mathrm{1}_{\{B_{UT_1}<0\}}\big]&=c_p\int_{-\infty}^0dx\phi(x)\big(\frac{1}{(1-2x)^{p+1}}-\frac{1}{(3-2x)^{p+1}}\big),
\end{align*}
where $c_p=\E[|N|^p]=\frac{\Gamma(1+p)}{2^{p/2}\Gamma(1+p/2)}.$
\item The joint law of $(1/\sqrt{T_1},B_{UT_1})$ admits the density $h$ defined on $\mathbb{R}^{+*}\times(-\infty,1]$ by
\begin{align*}h(z,x)&=\sqrt{\frac{2}{\pi}}\big(\emph{e}^{-z^2/2}-\emph{e}^{-(3-2x)^2z^2/2}\big)\mathrm{1}_{\{z>0,~0<x<1\}}\\
&+\sqrt{\frac{2}{\pi}}\big(\emph{e}^{-(1-2x)^2z^2/2}-\emph{e}^{-(3-2x)^2z^2/2}\big)\mathrm{1}_{\{z>0,~x<0\}}.
\end{align*}
\item The law of $B_{UT_1}$ admits the density $k$ defined on $(-\infty,1)$ by
$$k(x)=\frac{2(1-x)}{3-2x}\mathrm{1}_{\{0<x<1\}}+\frac{2}{(1-2x)(3-2x)}\mathrm{1}_{\{x<0\}}.$$
\end{itemize}
\end{theorem}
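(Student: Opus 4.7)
The plan is to derive all three statements from Corollary \ref{cor1}, which gives $(B_{UT_1}/\sqrt{T_1},1/\sqrt{T_1})\underset{\mathcal{L}}{=}(\Lambda L_1-\tfrac{1}{2}|B_1|,L_1)$. Combining this with the factorization $(|B_1|,L_1)\underset{\mathcal{L}}{=}R_1(1-V,V)$ recalled just after \eqref{dens}, with $R_1,V,\Lambda$ mutually independent and $V,\Lambda$ uniform on $[0,1]$, I obtain
\[
\big(B_{UT_1},\tfrac{1}{\sqrt{T_1}}\big)\underset{\mathcal{L}}{=}\big(A/V,\,R_1V\big),\qquad A:=\Lambda V-\tfrac{1-V}{2},
\]
which is the master identity underlying all three items.

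For the Mellin-type formulas, the independence of $R_1$ from $(V,\Lambda)$ gives
\[
\E\Big[\frac{1}{(\sqrt{T_1})^p}\phi(B_{UT_1})\mathrm{1}_{\{B_{UT_1}>0\}}\Big]=\E[R_1^p]\,\E\big[V^p\phi(A/V)\mathrm{1}_{\{A>0\}}\big].
\]
I compute $\E[R_1^p]=(p+1)c_p$ directly from the density $\sqrt{2/\pi}\,r^2e^{-r^2/2}$ of $R_1$ (which is a Bessel$(3)$ at time $1$). For the second factor, setting $b=\Lambda-(1-V)/(2V)$ and using that the condition $A>0$ is equivalent to $V\geq 1/(3-2b)$, I swap the order of integration:
\[
\E\big[V^p\phi(A/V)\mathrm{1}_{\{A>0\}}\big]=\int_0^1 \phi(b)\int_{1/(3-2b)}^1 v^p\,dv\,db=\frac{1}{p+1}\int_0^1\phi(b)\Big(1-\tfrac{1}{(3-2b)^{p+1}}\Big)db,
\]
and the $1/(p+1)$ cancels against $(p+1)c_p$ to yield the first formula. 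The negative-part formula is obtained by the same argument, the condition $A<0$ cutting $V$ to $[1/(3-2x),1/(1-2x)]$ for $x<0$.

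For the joint density, I condition on $V=v$: then $R_1V$ and $A/V$ are conditionally independent, $R_1V$ having density $\sqrt{2/\pi}\,z^2v^{-3}e^{-z^2/(2v^2)}$ and $A/V=\Lambda-(1-v)/(2v)$ being uniform on $[-(1-v)/(2v),(3v-1)/(2v)]$. Integrating over $v\in(0,1)$ the range reduces, according to the sign of $x$, to $[1/(3-2x),1]$ or $[1/(3-2x),1/(1-2x)]$, and the substitution $u=z/v$ converts the integral into $\int ue^{-u^2/2}\,du=-e^{-u^2/2}$, producing exactly the two exponential differences defining $h(z,x)$. The marginal density $k(x)$ is then obtained by integrating $h(z,x)$ in $z$ using $\int_0^{+\infty}e^{-\alpha z^2/2}dz=\sqrt{\pi/(2\alpha)}$, which immediately gives $2(1-x)/(3-2x)$ on $(0,1)$ and $2/[(1-2x)(3-2x)]$ on $(-\infty,0)$.

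The main obstacle is the careful bookkeeping of the $v$-ranges imposed by $A>0$ or $A<0$: one has to realize that the indicator $\mathrm{1}_{\{A>0\}}$ is equivalent to $V\in[1/(3-2b),1]$ after the change of variables $b=A/V$, and similarly for the other sign, as this is precisely what produces the exponents $p+1$ and the specific factors $(3-2x)$ and $(1-2x)$ appearing in the theorem. All remaining computations are elementary.
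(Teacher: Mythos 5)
Your proof is correct, and it takes a genuinely different route from the paper's. You derive everything from Corollary \ref{cor1} combined with the factorization $(|B_1|,L_1)\underset{\mathcal{L}}{=}R_1(1-V,V)$, which yields the clean master identity $(B_{UT_1},1/\sqrt{T_1})\underset{\mathcal{L}}{=}(A/V,R_1V)$ with $A=\Lambda V-\tfrac{1}{2}(1-V)$ and $R_1,V,\Lambda$ independent; the Mellin formulas, the joint density $h$ (by conditioning on $V$ and the substitution $u=z/v$) and the marginal $k$ (by integrating $h$ in $z$) then follow by elementary Fubini--Tonelli computations, and your bookkeeping of the $v$-ranges $[1/(3-2b),1]$ and $[1/(3-2x),1/(1-2x)]$, as well as the normalizations $\E[R_1^p]=(p+1)c_p$, all check out. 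The paper instead proves Part 1 without invoking Corollary \ref{cor1}: it writes $T_1^{-p/2}$ via a Gamma-integral in $\mu$, reduces to the Laplace-transform identity $\E[\int_0^{T_1}ds\,\phi(B_s)\mathrm{1}_{\{B_s>0\}}e^{-\mu^2T_1/2}]=\int_0^1db\,\phi(b)\mu^{-1}(e^{-\mu}-e^{-\mu(3-2b)})$ imported from Proposition 3.1 of \cite{elie2013expectation}, and then gets Part 2 by a monotone class argument and a change of variables, and Part 3 by setting $p=0$. So your argument makes explicit the remark preceding the theorem that the result is ``encoded in Corollary \ref{cor1}'': it is self-contained within the present paper (modulo Theorem \ref{theo1}, on which Corollary \ref{cor1} rests), and it produces the joint density more directly, whereas the paper's proof is logically independent of Theorem \ref{theo1} and thus serves as a cross-check against the earlier paper's computations.
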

\noindent Remark that from Theorem \ref{descB}, we get that 
$$(1-B_{UT_1}|B_{UT_1}>0)$$ is distributed as $Z$. We now give the proof of Theorem \ref{descB}.
\begin{proof}
To prove the first part of Theorem \ref{descB}, we use the fact that
$$\E\big[\frac{1}{(\sqrt{T_1})^p}\phi(B_{UT_1})\mathrm{1}_{\{B_{UT_1}>0\}}\big]$$
is equal to
$$\frac{1}{2^{p/2}\Gamma(1+p/2)}\int_0^{+\infty}d\mu\mu^{1+p}\E\big[\int_0^{T_1}ds\phi(B_s)\mathrm{1}_{\{B_s>0\}}\text{e}^{-\frac{\mu^2}{2}T_1}\big].$$
From Proposition 3.1 in \cite{elie2013expectation} (or the computation of $I_{\mu}$ in the same paper), the above expectation is equal to
$$\int_0^1db\phi(b)\frac{1}{\mu}(\text{e}^{-\mu}-\text{e}^{-\mu(3-2b)}).$$
Then, using Fubini's theorem and integrating in $\mu$ yields the first formula. The second formula is obtained in the same manner.\\

\noindent For the proof of Part 2 in Theorem \ref{descB}, in order to obtain the density at point $(z,x)$ in $\mathbb{R}^{+*}\times(0,1)$, $h(z,x)$, we note that from
Part 1 of Theorem \ref{descB}, the formula
$$\E\big[f(\frac{1}{\sqrt{T_1}},B_{UT_1})\mathrm{1}_{\{B_{UT_1}>0\}}\big]=\E[f(|N|,V)]-\int_0^1\frac{db}{3-2b}\E\big[f\big(\frac{|N|}{3-2b},b\big)\big],$$
with $V$ a uniform variable on $[0,1]$ independent of $N$, holds for every function $f$ of the form
$$f(z,b)=z^p\phi(b),$$ with $\phi$ some positive measurable function. An application of the monotone class theorem yields the validity of
the above formula for every positive measurable function $f$. Then, a simple change of variables gives the first formula in Part 2. The second formula is proved likewise.\\

\noindent To prove Part 3 in Theorem \ref{descB}, it suffices to take $p=0$ in Part 1.

\end{proof}

\section{Applications}\label{appli}
\noindent In this section, we give two applications of Theorem \ref{theo1} and Corollary \ref{cor1}.

\subsection{A family of centered Brownian functionals}

In \cite{elie2013expectation}, we established that the variable $H$ defined by
$$H=\frac{1}{T_1^{3/2}}\int_0^{T_1}ds B_s$$
admits moments of all orders and is centered. This centering property is equivalent to that of the random variable
$$\alpha=\frac{B_{UT_1}}{\sqrt{T_1}},$$
where $U$ is a uniform random variable on $[0,1]$, independent of $B$. In fact, Theorem \ref{theo1} and Corollary \ref{cor1} enable to build families
of centered functionals involving the Brownian motion and its first hitting time of level $1$, the local time at point $0$ and its inverse process, and the running maximum.
We have the following theorem, in which $H_1$ is equal to $H$.

\begin{theorem}\label{centeredva} 
For any $p\geq 1$, the random variables $H_p$ and $H'_p$ are centered, with
$$H_p=\frac{1}{T_1^{p/2+1}}\int_0^{T_1}ds\big((\frac{p+1}{2p^2}-1)M_s^p+B_sM_s^{p-1}\big)$$ and
$$H'_p=\frac{1}{\tau_1^{p/2+1}}\int_0^{\tau_1}ds\big(\frac{p+1}{2p^2}L_s^p-|B_s|L_s^{p-1}\big).$$
\end{theorem}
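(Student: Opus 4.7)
The plan is to compute $\E[H'_p]$ and $\E[H_p]$ by the same procedure: apply Fubini and introduce an independent uniform $U$ on $[0,1]$ to convert each time integral into an expectation at the uniformly sampled time $U\tau_1$ or $UT_1$; apply Theorem \ref{theo1} and Corollary \ref{cor1} to replace the resulting joint laws by their simple explicit versions in terms of $(|B_1|,L_1,\Lambda)$; and evaluate the remaining moments using Proposition \ref{mel}.

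Concretely, for $H'_p$, the change of variable $s=u\tau_1$ combined with Fubini yields
\begin{equation*}
\E[H'_p] = \E\left[\frac{p+1}{2p^2}\left(\frac{L_{U\tau_1}}{\sqrt{\tau_1}}\right)^{p} - \frac{|B_{U\tau_1}|}{\sqrt{\tau_1}}\left(\frac{L_{U\tau_1}}{\sqrt{\tau_1}}\right)^{p-1}\right],
\end{equation*}
after distributing the scaling factor $\tau_1^{-p/2}$ inside the parentheses. Theorem \ref{theo1} then delivers the joint identification $(|B_{U\tau_1}|/\sqrt{\tau_1},\,L_{U\tau_1}/\sqrt{\tau_1})\underset{\mathcal{L}}{=}(\tfrac{1}{2}|B_1|,\,\Lambda L_1)$, with $\Lambda$ uniform on $[0,1]$ independent of $(B_1,L_1)$. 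Integrating out $\Lambda$ via $\E[\Lambda^k]=1/(k+1)$ collapses the above expectation to
\begin{equation*}
\E[H'_p] = \frac{1}{2p^2}\E[L_1^p] - \frac{1}{2p}\E[|B_1|L_1^{p-1}],
\end{equation*}
and Proposition \ref{mel} evaluates both expectations with the same prefactor $1/(2^{p/2}\Gamma(1+p/2))$ multiplying $\Gamma(1+p)$ and $\Gamma(p)$ respectively. Since $\Gamma(1+p)=p\Gamma(p)$, the two terms cancel and $\E[H'_p]=0$.

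The treatment of $H_p$ is entirely parallel, with Corollary \ref{cor1} in place of Theorem \ref{theo1}: after the analogous Fubini and change of variable, the joint identification $(B_{UT_1}/\sqrt{T_1},\,M_{UT_1}/\sqrt{T_1})\underset{\mathcal{L}}{=}(\Lambda L_1 - \tfrac{1}{2}|B_1|,\,\Lambda L_1)$ makes the integrand collapse, after expansion, to the same combination $\tfrac{p+1}{2p^2}(\Lambda L_1)^p - \tfrac{1}{2}|B_1|(\Lambda L_1)^{p-1}$ already handled for $H'_p$, hence again zero. I do not anticipate any real obstacle beyond careful bookkeeping of the powers of $\sqrt{\tau_1}$ and $\sqrt{T_1}$ in the change of variable; the peculiar coefficient $(p+1)/(2p^2)$ appearing in both $H_p$ and $H'_p$ is tailored precisely so that the gamma identity $\Gamma(1+p)=p\Gamma(p)$ produces the cancellation.
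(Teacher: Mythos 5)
Your argument is correct and matches the paper's proof in all essentials: uniform sampling, Theorem \ref{theo1}, reduction of the expectation to $\frac{1}{2p^2}\E[L_1^p]-\frac{1}{2p}\E[|B_1|L_1^{p-1}]$, and cancellation via Proposition \ref{mel}. The only (cosmetic) difference is that the paper first observes $H_p\underset{\mathcal{L}}{=}H'_p$ by L\'evy's equivalence theorem and then treats only $H'_p$, whereas you handle $H_p$ directly through Corollary \ref{cor1}; both routes lead to the same cancellation.
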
 
\begin{proof}
First remark that from L\'evy's equivalence theorem, $H_p$ and $H'_p$ have the same law. Then, obviously the expectation of $H'_p$ is that of
$$(\frac{p+1}{2p^2})\frac{L_{U\tau_1}^p}{(\sqrt{\tau_1})^p}-\frac{|B_{U\tau_1}|(L_{U\tau_1})^{p-1}}{(\sqrt{\tau_1})^p}.$$
From Theorem \ref{theo1}, this random variable has the same law as
$$(\frac{p+1}{2p^2})\Lambda^pL_1^p-\frac{1}{2}|B_1|\Lambda^{p-1}L_1^{p-1}.$$
The expectation of this last quantity is equal to
$$\frac{1}{2p^2}\E[L_1^p]-\frac{1}{2p}\E[|B_1|L_1^{p-1}].$$
Using for example the Mellin transform of the couple $(|B_1|,L_1)$ given in Proposition \ref{mel}, it is easily seen that this expression is equal to zero (one may also use the martingale property of $\frac{1}{p}L_t^p-|B_t|L_t^{p-1}$, see \cite{revuz1999continuous}).
\end{proof}

\subsection{On the law of $R_{U\gamma}/\sqrt{\gamma}$}
We now focus on the distribution of $R_{U\gamma}/\sqrt{\gamma}$. From Corollary \ref{cor1}, we know that
$$\frac{R_{U\gamma}}{\sqrt{\gamma}}\underset{\mathcal{L}}{=}\Lambda L_1+\frac{1}{2}|B_1|\underset{\mathcal{L}}{=}R_1A',$$ 
with
$$A'=\Lambda U+\frac{1}{2}(1-U).$$
There is the following description of the laws of $A'$ and $R_{U\gamma}/\sqrt{\gamma}$.

\begin{proposition}\label{bes}
The law of $A'$ admits the density $l$ given by
$$l(a)=\emph{log}\big(\frac{1}{|2a-1|}\big)\mathrm{1}_{\{0<a<1\}}.$$
Consequently, the law of $R_{U\gamma}/\sqrt{\gamma}$ admits the following density:
$$\sqrt{\frac{2}{\pi}}x^2\int_1^{+\infty}dy y \emph{exp}(-\frac{x^2y^2}{2})l(\frac{1}{y}).$$
\end{proposition}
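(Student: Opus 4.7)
The plan is to handle the two parts separately, computing the density of $A'$ by direct integration, and then obtaining the density of $R_{U\gamma}/\sqrt{\gamma}$ by conditioning on $R_1$ in the factorization $R_{U\gamma}/\sqrt{\gamma}\underset{\mathcal{L}}{=}R_1 A'$ provided by Corollary \ref{cor1}.

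For the first part, I would let $f$ be a positive measurable function and write
$$\E[f(A')]=\int_0^1 du\int_0^1 d\lambda\, f\!\left(\lambda u+\tfrac{1}{2}(1-u)\right).$$
For fixed $u\in(0,1]$, the substitution $a=\lambda u+\tfrac{1}{2}(1-u)$ gives $d\lambda=du^{-1}da$, with $a$ ranging over $[\tfrac{1-u}{2},\tfrac{1+u}{2}]$. Swapping the order of integration, the resulting density at $a\in(0,1)$ is
$$l(a)=\int_0^1\frac{du}{u}\,\mathrm{1}_{\{(1-u)/2\leq a\leq (1+u)/2\}}.$$
The indicator amounts to $u\geq |2a-1|$, so $l(a)=\int_{|2a-1|}^{1}du/u=\text{log}(1/|2a-1|)$, as claimed.

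For the second part, recall that $R_1$ has the chi-distribution with three degrees of freedom, i.e.\ density $\sqrt{2/\pi}\,r^2\text{e}^{-r^2/2}\mathrm{1}_{\{r>0\}}$, and is independent of $A'$. Writing
$$\E\!\left[f\!\left(\tfrac{R_{U\gamma}}{\sqrt{\gamma}}\right)\right]=\int_0^\infty\sqrt{\tfrac{2}{\pi}}\,r^2\text{e}^{-r^2/2}dr\int_0^1 l(a)f(ra)\,da,$$
I would substitute $x=ra$ in the inner integral (so $a=x/r$, $da=dx/r$, and the constraint $0<a<1$ becomes $r>x$), obtaining after Fubini
$$\int_0^\infty dx\,f(x)\int_x^\infty\sqrt{\tfrac{2}{\pi}}\,r\,\text{e}^{-r^2/2}\,l(x/r)\,dr.$$
The final change of variable $y=r/x$ turns the inner integral into $\sqrt{2/\pi}\,x^2\int_1^\infty y\,\text{exp}(-x^2y^2/2)\,l(1/y)\,dy$, giving exactly the stated density.

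There is no genuine obstacle here: both parts reduce to Fubini and two elementary changes of variables once the factorization $R_{U\gamma}/\sqrt{\gamma}\underset{\mathcal{L}}{=}R_1A'$ from Corollary \ref{cor1} is in hand. The only bookkeeping to be careful about is the absolute value $|2a-1|$ arising from the two cases $a<1/2$ and $a>1/2$ when determining the range of $u$, and tracking the Jacobian in the final substitution.
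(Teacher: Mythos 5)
Your proposal is correct and follows essentially the same route as the paper: condition on $R_1$ in the factorization $R_{U\gamma}/\sqrt{\gamma}\underset{\mathcal{L}}{=}R_1A'$ and perform the same two changes of variables ($x=ra$, then $y=r/x$) after Fubini. The only difference is that you spell out the elementary computation of the density $l$ of $A'$, which the paper dismisses as straightforward; your derivation of it is correct.
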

\begin{proof}
The density of $A'$ is obtained thanks to straightforward computations. Let $f$ be a positive measurable function. Using the density of a three dimensional Bessel variable $R_1$, see \cite{revuz1999continuous}, we have
$$\E[f(R_1A')]=\sqrt{\frac{2}{\pi}}\int_0^{+\infty}dr r^2\text{e}^{-r^2/2}\E[f(rA')].$$
Now, using the density of $A'$, we get
\begin{align*}
\E[f(R_1A')]&=\sqrt{\frac{2}{\pi}}\int_0^{+\infty}dr r^2\text{e}^{-r^2/2}\int_0^r\frac{dx}{r}l\big(\frac{x}{r}\big)f(x)\\
&=\int_0^{+\infty}dxf(x)\sqrt{\frac{2}{\pi}}\int_x^{+\infty}dr r\text{e}^{-r^2/2}l\big(\frac{x}{r}\big)\\
&=\int_0^{+\infty}dxf(x)\sqrt{\frac{2}{\pi}}x^2\int_1^{+\infty}dy y\text{e}^{-x^2y^2/2}l\big(\frac{1}{y}\big).
\end{align*}
\end{proof}

\section{Conclusion and future work}

In this paper, we establish the law of a triplet associated with the pseudo-Brownian bridge. This process has been introduced in \cite{biane1987processus} and is defined as
$$\big(\frac{B_{u\tau_1}}{\sqrt{\tau_1}},~u\leq 1\big).$$
In particular, this enables us to understand in depth some properties of the random variable
$$\alpha=\frac{B_{UT_1}}{\sqrt{T_1}}$$
studied in \cite{elie2013expectation}. In a forthcoming work, we intend to develop some consequences of the obtained results for the Brownian bridge and the Brownian meander.

\newpage
\appendix

\section*{Appendices}

\section{A simple proof for the joint law of $(1/\sqrt{\tau_1},L_{U\tau_1})$}
The fact that
$$(\frac{1}{\sqrt{\tau_1}},L_{U\tau_1})\underset{\mathcal{L}}{=}(L_1,\Lambda)$$
can obviously be deduced from Theorem \ref{theo1}. However, interestingly, we can give a simple proof for this equality in law.
Indeed, for $\lambda\geq 0$ and $l<1$, we have
\begin{align*}
\E[\text{e}^{-\lambda\tau_1}\mathrm{1}_{\{L_{U\tau_1}\leq l\}}]&=\E[\frac{1}{\tau_1}\int_0^{\tau_1}ds\mathrm{1}_{\{L_{s}\leq l\}}\text{e}^{-\lambda\tau_1}]\\
&=\E[\frac{\tau_l}{\tau_1}\text{e}^{-\lambda\tau_1}].
\end{align*}
Now, consider in general $(\tau_l)$ a subordinator and denote by $\psi$ its Laplace exponent.
Thus, we have
\begin{align*}
\E[\frac{\tau_l}{\tau_1}\text{e}^{-\lambda\tau_1}]&=\E[\tau_l\int_0^{+\infty}dt\text{e}^{-(t+\lambda)\tau_1}]\\
&=\int_0^{+\infty}dt\E[\tau_l\text{e}^{-(t+\lambda)\tau_l}]\text{e}^{-(1-l)\psi(t+\lambda)}.
\end{align*}
Using the fact that the Laplace exponent is differentiable on $\mathbb{R}^{+*}$, we get
\begin{align*}
\E[\frac{\tau_l}{\tau_1}\text{e}^{-\lambda\tau_1}]&=\int_0^{+\infty}dt l\psi'(t+\lambda)\text{e}^{-l\psi(t+\lambda)}\text{e}^{-(1-l)\psi(t+\lambda)}\\
&=l\text{e}^{-\psi(\lambda)}.
\end{align*}
This proves the independence of $1/\sqrt{\tau_1}$ and $L_{U\tau_1}$ and the fact that 
$L_{U\tau_1}$ is uniformly distributed. The equality in law
$$\frac{1}{\sqrt{\tau_1}}\underset{\mathcal{L}}{=}L_1$$
is easily obtained by scaling.
\newpage
\section{On a one parameter family of random variables including $\alpha$}

In this section of the appendix, we consider the family of variables defined for $0<c\leq 1$ by
$$\alpha_c=\Lambda L_1-c|B_1|,$$
as an extension of our study of
$$\alpha\underset{\mathcal{L}}{=}\alpha_{1/2}\underset{\mathcal{L}}{=}B_{UT_1}/\sqrt{T_1}.$$
The variables $\alpha_c$, although less natural than $\alpha_{1/2}$, enjoy some similar remarkable properties. Indeed, Proposition \ref{desc3} and Proposition \ref{desc} admit the following extensions.
\begin{proposition}\label{desc4}
Let $0<c\leq 1$ and $C=1/c$. Let $\Lambda$ and $U$ be two independent uniform variables on $[0,1]$ and 
$$A_c=\Lambda U-c(1-U).$$ We have
\begin{align*}
&(A_c|A_c>0)\underset{\mathcal{L}}{=}VZ_C\\
&(-A_c|A_c<0)\underset{\mathcal{L}}{=}cV\\
&\PP[A_c>0]=1-c\emph{log}(1+C),
\end{align*}
where $V$ and $Z_C$ are independent, with $V$ uniform on $[0,1]$ and $Z_C$ a random variable with density given by
$$\frac{C}{1-c\emph{log}(1+C)}\frac{dz z}{(1+Cz)}\mathrm{1}_{\{0<z<1\}}.$$
\end{proposition}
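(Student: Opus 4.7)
The plan is to mimic the direct density calculation used in the proof of Proposition \ref{desc3}, carrying the parameter $c$ through unchanged. Conditionally on $U=u$, the variable $A_c=\Lambda u-c(1-u)$ is uniform on the interval $[-c(1-u),\,u(1+c)-c]$, so for every positive measurable function $f$,
$$\E[f(A_c)]=\int_0^1 \frac{du}{u}\int_{-c(1-u)}^{u(1+c)-c}f(x)\,dx=\int_{-c}^{1}f(x)\,u_c(x)\,dx,$$
where $u_c(x)=\E\bigl[U^{-1}\mathrm{1}_{\{(x+c)/(1+c)\le U\le\min(1,(x+c)/c)\}}\bigr]$. The two bounds come from inverting the inequalities that define the conditional support.

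Next I would split the analysis by the sign of $x$. For $x\in[-c,0]$ the upper bound on $U$ is $(x+c)/c$ and the lower bound is $(x+c)/(1+c)$, so $\int du/u$ telescopes to the constant $\log((1+c)/c)=\log(1+C)$, independent of $x$. This instantly gives $\PP[A_c<0]=c\log(1+C)$ and shows that $-A_c$ conditionally on $\{A_c<0\}$ is uniform on $[0,c]$, hence distributed as $cV$. For $x\in[0,1]$ the upper bound becomes $1$ and one finds $u_c(x)=\log((1+C)/(1+Cx))$; integrating over $[0,1]$ (or merely taking the complement of the previous probability) then yields $\PP[A_c>0]=1-c\log(1+C)$.

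To identify the conditional law on $\{A_c>0\}$, I would compute the density of the product $VZ_C$ by the usual mixture formula $f_{VZ_C}(x)=\int_x^1 z^{-1}f_{Z_C}(z)\,dz$ for $x\in(0,1)$. Substituting the prescribed density of $Z_C$, the inner integral reduces to
$$\int_x^1 \frac{C}{(1-c\log(1+C))(1+Cz)}\,dz=\frac{\log\bigl((1+C)/(1+Cx)\bigr)}{1-c\log(1+C)},$$
which coincides with the conditional density of $A_c$ given $\{A_c>0\}$ obtained in the previous step, up to the normalising constant already computed.

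No step looks like a genuine obstacle; the only thing requiring care is the piecewise nature of the upper bound on $U$, i.e.\ the change of regime at $x=0$, which is exactly what produces the structural asymmetry between the two conditional distributions $(A_c\mid A_c>0)$ and $(-A_c\mid A_c<0)$, and explains why $Z_C$ appears on one side while a plain uniform variable suffices on the other.
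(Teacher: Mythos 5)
Your proposal is correct and follows exactly the route the paper takes: the paper's proof of Proposition \ref{desc4} consists of computing the density of $A_c$ directly, mirroring the argument given for Proposition \ref{desc3}, which is precisely your calculation of $u_c(x)$ on $[-c,0]$ and $[0,1]$ and the comparison with the density of $VZ_C$. All the formulas you display check out (in particular $u_c(x)=\log(1+C)$ for $-c\le x\le 0$ and $u_c(x)=\log\bigl((1+C)/(1+Cx)\bigr)$ for $0\le x\le 1$), so nothing further is needed.
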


\begin{proposition}\label{desc5}
Let $0<c\leq 1$. The following equalities in law hold.
\begin{align*}
&(\alpha_c|\alpha_c>0)\underset{\mathcal{L}}{=}|N|Z_C\\
&(-\alpha_c|\alpha_c<0)\underset{\mathcal{L}}{=}c|N|\\
&\PP[\alpha_c>0]=1-c\emph{log}(1+C).
\end{align*}
\end{proposition}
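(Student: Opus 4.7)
The plan is to mimic the derivation by which Proposition \ref{desc} was recovered from Proposition \ref{desc3} in the body of the paper, but now using the generalization Proposition \ref{desc4} in place of Proposition \ref{desc3}. The crucial input is the factorization
$$(|B_1|,L_1)\underset{\mathcal{L}}{=}R_1(1-U,U)$$
recalled after formula \eqref{dens}, where $U$ is uniform on $[0,1]$ and independent of $R_1$ (a three-dimensional Bessel variable at time $1$).

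First I would apply this factorization to rewrite
$$\alpha_c=\Lambda L_1-c|B_1|\underset{\mathcal{L}}{=}R_1\bigl(\Lambda U-c(1-U)\bigr)=R_1 A_c,$$
where $\Lambda,U$ are independent uniforms on $[0,1]$ and $A_c$ is exactly the variable introduced in Proposition \ref{desc4}. Since $R_1>0$ almost surely, the sign of $\alpha_c$ coincides with the sign of $A_c$, so one immediately gets
$$\PP[\alpha_c>0]=\PP[A_c>0]=1-c\log(1+C),$$
which is the third assertion; moreover the conditional distributions factor as
$$(\alpha_c\mid\alpha_c>0)\underset{\mathcal{L}}{=}R_1\,(A_c\mid A_c>0),\qquad (-\alpha_c\mid\alpha_c<0)\underset{\mathcal{L}}{=}R_1\,(-A_c\mid A_c<0).$$

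Then I would invoke Proposition \ref{desc4}, which gives $(A_c\mid A_c>0)\underset{\mathcal{L}}{=}VZ_C$ and $(-A_c\mid A_c<0)\underset{\mathcal{L}}{=}cV$ with $V$ uniform on $[0,1]$ independent of $Z_C$ (and, here, independent of $R_1$). This yields
$$(\alpha_c\mid\alpha_c>0)\underset{\mathcal{L}}{=}R_1 V Z_C,\qquad (-\alpha_c\mid\alpha_c<0)\underset{\mathcal{L}}{=}c\,R_1 V.$$
The one small computation needed is the standard identity $R_1 V\underset{\mathcal{L}}{=}|N|$: using the density $\sqrt{2/\pi}\,r^2 e^{-r^2/2}$ of $R_1$ and integrating out $V$, the density of $R_1V$ at $x>0$ is $\sqrt{2/\pi}\int_x^{+\infty}r\,e^{-r^2/2}dr=\sqrt{2/\pi}\,e^{-x^2/2}$, that of $|N|$. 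Plugging this into the two displays above produces $(\alpha_c\mid\alpha_c>0)\underset{\mathcal{L}}{=}|N|Z_C$ and $(-\alpha_c\mid\alpha_c<0)\underset{\mathcal{L}}{=}c|N|$, finishing the proof.

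There is essentially no obstacle: once Proposition \ref{desc4} and the Bessel/uniform factorization of $(|B_1|,L_1)$ are in hand, Proposition \ref{desc5} is a one-line transcription via the identity $R_1V\underset{\mathcal{L}}{=}|N|$. The only point to be a little careful about is the independence bookkeeping, namely that the uniform $V$ produced by Proposition \ref{desc4} can be chosen independent of $R_1$, which is automatic because $R_1$ comes from the factorization step and is independent of $(\Lambda,U)$, hence independent of every variable constructed from $(\Lambda,U)$ alone.
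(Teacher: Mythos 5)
Your proof is correct and follows exactly the paper's own route: the factorization $(|B_1|,L_1)\underset{\mathcal{L}}{=}R_1(1-U,U)$ giving $\alpha_c\underset{\mathcal{L}}{=}R_1A_c$, Proposition \ref{desc4} for the conditional laws of $A_c$, and the identity $R_1V\underset{\mathcal{L}}{=}|N|$. You merely spell out the independence bookkeeping that the paper leaves implicit, so there is nothing to add.
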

\begin{proof}
To establish Proposition \ref{desc4}, we simply compute the density of $A_c$.
Proposition \ref{desc5} ensues since
$$\alpha_c\underset{\mathcal{L}}{=}R_1A_c$$ and
$$R_1V\underset{\mathcal{L}}{=}|N|,$$
with the same notation as previously.
\end{proof}
\newpage
\bibliographystyle{abbrv}
\bibliography{bibli_ry}

\end{document}